\newtheorem{theorem}{Theorem}[section]
\newtheorem{lemma}[theorem]{Lemma}
\newtheorem{proposition}[theorem]{Proposition}
\newtheorem{corollary}[theorem]{Corollary}
\theoremstyle{definition}
\newtheorem{example}[theorem]{Example}
\newtheorem{question}[theorem]{Question}
\theoremstyle{remark}
\newtheorem{remark}[theorem]{Remark}
\numberwithin{equation}{section}
\newcommand{\bP}{\operatorname{\mathbb{P}}}
\newcommand{\F}{\operatorname{\mathbb{F}}}
\begin{document}

\title{Smoothness in pencils of hypersurfaces over finite fields}

\author{Shamil Asgarli}
\address{Department of Mathematics, University of British Columbia, Vancouver, BC V6T 1Z2}
\email{sasgarli@math.ubc.ca}

\author{Dragos Ghioca}
\address{Department of Mathematics, University of British Columbia, Vancouver, BC V6T 1Z2}
\email{dghioca@math.ubc.ca}

\subjclass[2020]{Primary 14N05; Secondary 14J70, 14G15}
\keywords{pencil of hypersurfaces, finite fields, smoothness}

\begin{abstract}
We study pencils of hypersurfaces over finite fields $\mathbb{F}_q$ such that each of the $q+1$ members defined over $\mathbb{F}_q$ is smooth. 
\end{abstract}

\maketitle

\section{Introduction}

It is a well-known principle in algebraic geometry that ``most" hypersurfaces in $\mathbb{P}^n$ of a given degree $d$ defined over some field $k$ is smooth. When the underlying field is algebraically closed, we can make this precise by asserting that smooth hypersurfaces of a given degree form a dense open subset of the parameter space under the Zariski topology. When $k = \mathbb{F}_q$ is a finite field, it becomes more subtle to quantify the principle that the number of smooth hypersurfaces of a fixed degree $d$ defined over $\F_q$ is sufficiently large.

As an application of Lang--Weil theorem \cite{LW54}, the proportion of smooth hypersurfaces of degree $d$ defined over $\F_q$ tends to $1$ as $q\mapsto \infty$. While this justifies the abundance of smoothness over finite fields, it does not answer finer questions on the distribution of smooth hypersurfaces. In this paper, we consider the following general question on the existence of smooth hypersurfaces along linear subspaces:

\begin{question}\label{quest:motivation}
Fix a prime power $q$. Let $(n, d, r)$ be any triple of positive integers. Does there exist a linear subspace $\mathcal{L}$ of projective dimension $r$ over $\F_q$ parametrizing degree $d$ hypersurfaces in $\mathbb{P}^n$ such that each of the $\F_q$-members of $\mathcal{L}$ is smooth? 
\end{question}

To be more precise, suppose that we have $r+1$ hypersurfaces $X_0, \dots, X_r$ defined by $X_i = \{F_i=0\}$ for some homogeneous polynomials $F_i\in \F_q[x_0,\dots, x_n]$. We can consider the vector space $\mathcal{L}=\langle F_0, \dots, F_r\rangle$ spanned by $F_i$. If $\mathcal{L}$ has maximal dimension $r+1$, we say that $\mathcal{L}$ has \emph{projective dimension} $r$. By \emph{$\F_q$-members} of $\mathcal{L}$, we mean the elements of $\mathcal{L}$ defined over $\F_q$, or equivalently, the hypersurfaces which are expressible as $X = \{a_0 F_0 + a_1 F_1 + \dots + a_r F_r=0\}$ where $a_i\in \F_q$ for each $0\leq i\leq r$. Note that $\mathcal{L}$ has exactly $\# \bP^r(\F_q) = q^{r}+q^{r-1}+\dots+1$ many $\F_q$-members.

When $r=1$, the space $\mathcal{L}=\langle F_0, F_1\rangle$ is called a \emph{pencil}. The main Question~\ref{quest:motivation} in this special case reduces to the following:

\medskip

\begin{question}~\label{quest:pencils}
Does there exist a pencil $\mathcal{L}$ of hypersurfaces in $\mathbb{P}^n$ of degree $d$ over $\F_q$ such that each of the $q+1$ members of $\mathcal{L}$ defined over $\F_q$ is smooth?
\end{question} 

Our main result asserts that Question~\ref{quest:pencils} has a positive answer when $q$ is sufficiently large compared to $d$. More precisely, we prove the following effective result.

\begin{theorem}\label{thm:pencil}
Let $n, d$ be positive integers with $n\geq 2$ and $d\geq 2$. Suppose that 
$$
q > \left(\frac{1+\sqrt{2}}{2}\right)^2 \left((n+1)(d-1)^{n}\right)^2 \left((n+1)(d-1)^{n}-1\right)^2\left((n+1)(d-1)^{n}-2\right)^2.
$$
Then there exists a pencil of hypersurfaces of degree $d$ in $\bP^n$ such that each of the $q+1$ members defined over $\F_q$ is smooth.
\end{theorem}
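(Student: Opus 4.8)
The plan is to realise a good pencil as a fibre, with no $\F_q$-rational point, of a degree-$N$ covering of $\bP^1$, where $N\colonequals(n+1)(d-1)^n$ is the degree of the discriminant. Let $M\colonequals\binom{n+d}{n}-1$ and let $\mathcal D\subset\bP^M$ be the discriminant hypersurface in the parameter space of degree-$d$ hypersurfaces of $\bP^n$; recall that $\mathcal D$ is irreducible of degree $N$, the irreducibility following from the fact that the incidence variety $\{(X,x):x\in\operatorname{Sing}X\}$ is a projective bundle over $\bP^n$, hence smooth and irreducible. An $\F_q$-line $\ell\subset\bP^M$ is a pencil, and all $q+1$ of its $\F_q$-members are smooth exactly when $\ell\cap\mathcal D$ has no $\F_q$-rational point. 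Since the space of all lines has dimension growing with $M$, I would not range over it directly; instead I would fix a sufficiently general triple $(F_0,G_1,G_2)$ of degree-$d$ forms over $\F_q$ — in particular $F_0$ is smooth, so the common member $[F_0]$ of all the pencils below is automatically smooth — spanning a plane $\Pi\subset\bP^M$, and consider only the one-parameter family $\mathcal L_t=\langle F_0,\,G_1+tG_2\rangle$, $t\in\bP^1$. Writing $C\colonequals\mathcal D\cap\Pi$, a plane curve of degree $N$ not containing $[F_0]$, the singular members of $\mathcal L_t$ are exactly $\ell_t\cap C$ where $\ell_t\subset\Pi$ is the line of the pencil, so projection of $C$ away from $[F_0]$ is a degree-$N$ morphism $\pi\colon C\to\bP^1$ whose fibre over $t$ has no $\F_q$-point precisely when every $\F_q$-member of $\mathcal L_t$ is smooth (for all but finitely many $t$, where $\mathcal L_t$ degenerates; these are negligible in the count below).

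For a sufficiently general choice over $\bar\F_q$, $C$ is irreducible with at worst nodes, and $\pi$ is a Lefschetz pencil: its branch points are the simple tangent lines to $C$ through $[F_0]$, at most $N(N-1)$ in number, each a simple ramification. Consequently the geometric monodromy group of $\pi$ is the full symmetric group $S_N$ — it is transitive because $C$ is irreducible, and generated by transpositions because the branching is simple. In particular both $C$ and the curve $W\colonequals(C\times_{\bP^1}C)\smallsetminus\Delta_C$ of ordered pairs of distinct points in a common fibre are geometrically irreducible, and Riemann--Hurwitz bounds their genera by explicit polynomials in $N$: one has $g(C)\le\binom{N-1}{2}$, while a transposition fixes $(N-2)(N-3)$ of the $N(N-1)$ ordered pairs and so contributes $2N-3$ to the ramification of $W\to\bP^1$, giving the cubic bound $g(W)\le \tfrac12 N(N-1)(2N-5)+1$. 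One must also produce such a triple $(F_0,G_1,G_2)$ over $\F_q$ itself; this follows for $q$ large from Lang--Weil applied to the non-empty open $\F_q$-rational locus of triples enjoying these properties.

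For the count, set $n_t\colonequals\#\pi^{-1}(t)(\F_q)$ for $t\in\bP^1(\F_q)$, so that $\sum_t n_t=\#C(\F_q)$ and $\sum_t n_t(n_t-1)=\#W(\F_q)$. The pencil $\mathcal L_t$ is \emph{bad} iff $n_t\ge 1$; since $0\le n_t\le N$, the indicator of $\{n_t\ge1\}$ is at most $n_t-\tfrac1N n_t(n_t-1)$ for every $t$, whence
\[
  \#\{\text{bad }t\}\ \le\ \#C(\F_q)-\tfrac1N\,\#W(\F_q).
\]
Applying the Hasse--Weil bound to the two geometrically irreducible curves, $\#C(\F_q)\le q+1+2g(C)\sqrt q$ and $\#W(\F_q)\ge q+1-2g(W)\sqrt q$, the number of good $t$ is at least
\[
  \tfrac{q+1}{N}-\Bigl(2g(C)+\tfrac{2}{N}g(W)\Bigr)\sqrt q ,
\]
which is positive once $q+1>\bigl(2N\,g(C)+2g(W)\bigr)\sqrt q$. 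Inserting the cubic-in-$N$ genus bounds above and solving the resulting quadratic inequality in $\sqrt q$ yields a threshold of exactly the shape in the statement; removing the finitely many degenerate values of $t$ is harmless, since the surviving count is of order $q/N$. The main obstacle is the geometric input of the second paragraph: that a general plane section $C$ of the (singular) discriminant is irreducible with only nodes and that $\pi$ is a Lefschetz pencil, so that the monodromy is $S_N$ and $g(C),g(W)$ are bounded purely in terms of $N$. Establishing this requires replacing $\mathcal D$ by the smooth incidence variety $\{(X,x):x\in\operatorname{Sing}X\}$, analysing there the loci along which the family $\mathcal L_t$ degenerates, and then a Bertini-type argument to descend to a general plane defined over $\F_q$.
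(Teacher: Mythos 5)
Your overall framework coincides with the paper's (view a pencil as a line in the parameter space, use that the discriminant $\mathcal{D}$ is geometrically irreducible of degree $N=(n+1)(d-1)^n$, cut by a plane, and count $\F_q$-points in fibres of lines), but the proof rests on geometric input that you explicitly leave open, and it is not a minor technicality: you need a plane $\Pi$ and a point $[F_0]$, both defined over $\F_q$, such that $C=\mathcal{D}\cap\Pi$ is geometrically irreducible with controlled singularities and the projection from $[F_0]$ is separable with simple branching and full $S_N$ monodromy (this is what makes $W$ geometrically irreducible, which your count needs). The discriminant is quite singular: its generic plane sections acquire cusps as well as nodes (coming from the cuspidal stratum of $\mathcal{D}$), so ``at worst nodes'' is not correct as stated, and in characteristic $p$ one must additionally rule out inseparability and wild-ramification pathologies of the projection. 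Most importantly, your descent to $\F_q$ ``by Lang--Weil applied to the open locus of good triples'' is exactly where effectivity is lost: to extract any explicit threshold you would need explicit degree bounds for the bad locus inside the large-dimensional space of triples $(F_0,G_1,G_2)$, which you do not supply. As written, the argument can only yield an ineffective ``for $q\gg_{n,d}0$'' statement, not the theorem with its explicit constant.

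Even granting all the geometric input, the numbers do not come out: your condition $q+1>\bigl(2Ng(C)+2g(W)\bigr)\sqrt q$ with $g(C)\approx N^2/2$ and $g(W)\approx N^3$ forces roughly $q\gtrsim 9N^6$, which is weaker than the stated bound $\left(\frac{1+\sqrt{2}}{2}\right)^2N^2(N-1)^2(N-2)^2\approx 1.46\,N^6$; so the claim that your inequality yields ``exactly the shape in the statement'' fails at the level of the constant (and there are further lower-order corrections you ignore, since $W$ is open and singular, and for a nodal or cuspidal $C$ one should work with the arithmetic genus as in Aubry--Perret). The paper's route avoids all of this: it slices $\mathcal{D}$ by an $\F_q$-plane using the effective Kaltofen/Cafure--Matera irreducibility-preserving Bertini result, and then finds the desired line by a purely combinatorial double count of incidences $(L,P)$ and $(L,\{P_1,P_2\})$ over \emph{all} $\F_q$-lines of that plane; since two distinct points determine a unique line, no auxiliary curve $W$, no monodromy, and no genericity over $\F_q$ are needed --- only the Aubry--Perret bound $\lvert \#C(\F_q)-(q+1)\rvert\le(\delta-1)(\delta-2)\sqrt q$ --- and this is what produces the explicit constant. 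Note that it is precisely your restriction to lines through the fixed member $[F_0]$ that forces the second-moment curve $W$ and its monodromy into the picture; ranging over all lines in the plane removes that need.
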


We remark that the case $n=2$ and $d=2$ in Question~\ref{quest:pencils} was investigated in our previous work \cite{AG22}*{Example 2.6}. In that example, the smooth conics $D_1$, $D_2$, \dots, $D_{q+1}$ arise from a pencil of conics for a suitably constructed base locus $B$, which consists of $3$ points Galois-conjugated over $\F_{q^3}$, and a single $\F_q$-point. The construction works for any prime power $q$, and it follows that Question~\ref{quest:pencils} has a positive answer in the case $(n, d)=(2,2)$ over $\F_q$ for each $q$.

\begin{remark}
It is impossible for Question~\ref{quest:motivation} to have a positive answer for all possible choices of $(n, d, r)$. Indeed, $r$ must be strictly less than the dimension of the projective space parametrizing all degree $d$ hypersurfaces in $\bP^n$. In particular, it is necessary that $r \leq \binom{n+d}{d}-2$. However, this condition is not sufficient in general; indeed, Example~\ref{ex:(n,d,r)=(2,2,3)} below shows that Question~\ref{quest:motivation} has a negative answer in the case $(n, d, r)=(2, 2, 3)$ over $\F_2$, and yet $r = 3 < \binom{2+2}{2}-2=4$.  Nevertheless, we expect that Question~\ref{quest:pencils} always has a positive answer. In other words, we believe that Theorem~\ref{thm:pencil} should be true with no additional hypothesis on $q$ and $d$. \end{remark}

\begin{example}\label{ex:(n,d,r)=(2,2,2)} Let $q=2$. Consider the polynomials $f_0=x^2+y^2+xz$, $f_1=xy+xz+z^2$ and $f_2=x^2+yz$ in $\F_2[x,y,z]$. One can check that the $7$ conics defined by $f_0, f_1, f_2, f_0+f_1, f_0+f_2, f_1+f_2, f_0+f_1+f_2$ are all smooth. In other words, the conic defined by
$$
a_0 f_0 + a_1 f_1 + a_2 f_2 = 0
$$
is smooth for each $[a_0:a_1:a_2]\in \bP^2(\F_2)$. Thus, Question~\ref{quest:motivation} has a positive answer for the case $(n, d, r)=(2, 2, 2)$ over $\F_2$. \end{example}

\begin{example}\label{ex:(n,d,r)=(2,2,3)}
Let $q=2$. When $(n, d, r)=(2,2,3)$, we are searching for $r+1=4$ conics $\{f_0=0\}, \{f_1=0\}, \{f_2=0\}, \{f_3=0\}$ such that $\{a_0 f_0 + a_1 f_1 + a_2 f_2 + a_3 f_3=0\}$ is a smooth conic for each of the $15$ values $[a_0 : a_1: a_2 : a_3]\in \bP^3(\F_2)$. There are $q^5-q^2=28$ smooth conics over $\F_2$. Checking all possible $\binom{28}{4}=20475$ subsets of size $4$ in Macaulay2, we see that no such $4$-tuple $(f_0, f_1, f_2, f_3)$ exists. Thus, Question~\ref{quest:motivation} has a negative answer for the case $(n, d, r)=(2, 2, 3)$ over $\F_2$.
\end{example}

Based on the examples above, it would be interesting to characterize all triples $(n, d, r)$ for a given finite field $\F_q$ such that Question~\ref{quest:motivation} has a positive answer.

The present paper is organized as follows. In Section~\ref{sect:main} we give a proof of our main theorem. In Section~\ref{sect:quadrics} we give a concrete approach to settle Question~\ref{quest:pencils} affirmatively in the special case $(n, d)=(3,2)$ for all $q$. 

\medskip

\textbf{Acknowledgements.} We thank Lian Duan and Felipe Voloch for discussions regarding the problem investigated in this paper. The first author is supported by a Postdoctoral Research Fellowship and an NSERC PDF award at the University of British Columbia. The second author is supported by an NSERC Discovery grant.

\section{Main result}~\label{sect:main}

We begin the section by explaining the strategy behind the proof of Theorem~\ref{thm:pencil}. The key observation is that a pencil of hypersurfaces of degree $d$ can be viewed as a line inside the parameter space $\bP^{N}$ with $N=\binom{n+d}{d}-1$ whose points correspond to degree $d$ hypersurfaces in $\bP^n$. It turns out that the singular hypersurfaces of degree $d$ are parametrized by a hypersurface $\mathcal{D}$ inside $\bP^N$, known as the \emph{discriminant}. The singular members of a given pencil $\mathcal{L}\cong \bP^1$ precisely corresponds to the intersection $\mathcal{L}\cap \mathcal{D}$. Thus, it suffices to find an $\F_q$-line that meets the discriminant hypersurface only at non-$\F_q$-points. We prove a general result that guarantees such a line for any hypersurface $X$ in Proposition~\ref{prop:hypersurfaces}. The proof of this latter result naturally reduces (after slicing $X$ by a suitable plane defined over $\F_q$) to the case when $X$ is a plane curve. The case of plane curves is proved separately in Proposition~\ref{prop:curves} and contains the novel part of the paper.   
 
Before we proceed with more technical details, we clarify the usage of the word ``irreducible". Given a hypersurface $X\subset \bP^n$ defined by $\{F=0\}$ over $\F_q$, we say that $X$ is \emph{irreducible} if $F$ cannot be factored into a product of two polynomials of smaller degree in $\F_q[x_0, \dots, x_n]$. We say that $X$ is \emph{geometrically irreducible} if $X$ is irreducible when viewed over the algebraic closure $\overline{\F}_q$.

As alluded above, we begin by proving an effective result that guarantees the existence of an $\F_q$-line whose intersection with a given plane curve has no $\F_q$-points.

\begin{proposition}\label{prop:curves}
Suppose that $C\subset \bP^2$ is a geometrically irreducible curve of degree $\delta \geq 2$ defined over $\F_q$. If $q>\left(\frac{1+\sqrt{2}}{2}\right)^2 \delta^2 (\delta-1)^2 (\delta-2)^2$, then we can find an $\F_q$-line $L\subset\bP^2$ such that the intersection $C\cap L$ has no $\F_q$-points.
\end{proposition}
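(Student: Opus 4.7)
The plan is to count the ``bad'' $\F_q$-lines --- those $L \subseteq \bP^2$ for which $L \cap C$ contains at least one $\F_q$-point --- and to show that under the hypothesis on $q$ their number is strictly less than $q^2+q+1$, so that a good line must exist. Set $m := |C(\F_q)|$; by the Weil bound for geometrically irreducible plane curves we have $m \leq q + 1 + (\delta-1)(\delta-2)\sqrt{q}$. If $m \leq q$, pick any $P_0 \in \bP^2(\F_q) \setminus C(\F_q)$ and observe that the $q+1$ $\F_q$-lines through $P_0$ account for the $m$ points of $C(\F_q)$ disjointly (each such point lies on a unique line through $P_0$, since $P_0 \notin C$); by pigeonhole at least one line carries no $\F_q$-point of $C$, and we are done. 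Henceforth assume $m > q$.

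The main argument is a projection-plus-averaging step. Fix (for now) $P_0 \in \bP^2(\F_q) \setminus C(\F_q)$ and, for each $\F_q$-line $L$ through $P_0$, set $k_L := |L \cap C(\F_q)|$. Since $C$ is geometrically irreducible of degree $\delta \geq 2$, no line is contained in $C$, so $k_L \leq \delta$. Combining $\sum_{L \ni P_0} k_L = m$ with the elementary inequality $j - 1 \geq j(j-1)/\delta$ valid for $2 \leq j \leq \delta$ yields
\[
\#\{L \ni P_0 : k_L \geq 1\} \;=\; m - \sum_{L \ni P_0}(k_L-1)_+ \;\leq\; m - \frac{2\,T(P_0)}{\delta},
\]
where $T(P_0) := \#\{\{P,Q\} \subseteq C(\F_q) : P \neq Q,\ P_0 \in \overline{PQ}\}$. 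Hence a good line in the pencil through $P_0$ exists as soon as $T(P_0) \geq \delta(m-q)/2$.

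To produce such a $P_0$, we average $T$ over $\bP^2(\F_q)$. Each pair $\{P,Q\} \subseteq C(\F_q)$ lies on a unique $\F_q$-line containing $q+1$ $\F_q$-points, so $\sum_{P_0 \in \bP^2(\F_q)} T(P_0) = (q+1)\binom{m}{2}$. On the other hand $\sum_{P_0 \in C(\F_q)} T(P_0) = \sum_{\{P,Q\}} |\overline{PQ} \cap C(\F_q)| \leq \delta\binom{m}{2}$, since every $\F_q$-line meets $C$ in at most $\delta$ $\F_q$-points. Pigeonhole then produces $P_0 \in \bP^2(\F_q) \setminus C(\F_q)$ with
\[
T(P_0) \;\geq\; \frac{(q+1-\delta)\,m(m-1)}{2\,(q^2+q+1-m)},
\]
so it suffices to establish the inequality
\[
(q+1-\delta)\,m(m-1) \;\geq\; \delta\,(m-q)(q^2+q+1-m).
\]

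The main obstacle is verifying this final inequality. Substituting the extremal value $m = q+1+(\delta-1)(\delta-2)\sqrt{q}$ from the Weil bound and writing $t := \sqrt{q}$ reduces it to a polynomial inequality in $t$ whose leading comparison is $t^{6} \geq \delta(\delta-1)(\delta-2)\,t^{5}$, i.e., $\sqrt{q} \geq \delta(\delta-1)(\delta-2)$ --- precisely the asymptotic encoded in the hypothesized lower bound on $q$. I expect the precise constant $\tfrac{1+\sqrt{2}}{2}$ to emerge by isolating the sub-leading terms and solving a resulting quadratic in $\sqrt{q}$.
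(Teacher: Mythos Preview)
Your core ingredients coincide with the paper's: the elementary inequality $j-1\ge j(j-1)/\delta$ for $j\le\delta$, the double count of collinear pairs $\{P,Q\}\subset C(\F_q)$, and the Hasse--Weil bound. The paper, however, runs the argument \emph{globally}: it sets $t_i=\#\{L:|L\cap C(\F_q)|=i\}$, uses $\sum_i it_i=(q+1)m$ and $\sum_i\binom{i}{2}t_i=\binom{m}{2}$ to obtain directly
\[
t_0\;\ge\;(q^2+q+1)-(q+1)m+\frac{m(m-1)}{\delta},
\]
and then checks this is positive. Your localize-to-a-pencil-through-$P_0$ followed by averaging $T(P_0)$ over $\bP^2(\F_q)\setminus C(\F_q)$ is a detour that reproduces the same inequality up to harmless factors (the $(q+1-\delta)$ in place of $(q+1)$ comes from discarding $P_0\in C$); it buys nothing and costs clarity.

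The genuine gap is that your argument is unfinished. You stop at ``I expect the precise constant $\tfrac{1+\sqrt2}{2}$ to emerge\ldots'', but the proposition asserts a specific threshold, and proving it requires actually carrying out the polynomial estimate. The paper does this explicitly: after substituting the Weil bounds it reduces to
\[
q^2>\delta(\delta-1)(\delta-2)q\sqrt q+\bigl[(\delta-1)^2(\delta-2)^2+(\delta-1)\bigr]q+(\delta+1)(\delta-1)(\delta-2)\sqrt q,
\]
absorbs the last term via $\sqrt q>\tfrac12(\delta+1)(\delta-1)(\delta-2)$, and solves the remaining quadratic in $\sqrt q$; the constant $\tfrac{1+\sqrt2}{2}$ arises from the estimate $\delta^2(\delta-1)^2(\delta-2)^2\ge 4(\delta-1)^2(\delta-2)^2+4(\delta+1)$. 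You would need an analogous computation. Two smaller points also need attention: you should justify that your quadratic $g(m)$ is decreasing on the admissible range (its vertex lies near $\tfrac{\delta}{2}(q+1)$, to the right of $q+1+(\delta-1)(\delta-2)\sqrt q$ under the hypothesis), so that the upper Weil value is indeed extremal; and your stated leading comparison has a small slip --- at $m=q+1+(\delta-1)(\delta-2)\sqrt q$ the $t^5$-coefficient of your LHS$-$RHS is $(2-\delta)(\delta-1)(\delta-2)$, giving $\sqrt q\gtrsim(\delta-1)(\delta-2)^2$ rather than $\delta(\delta-1)(\delta-2)$. This error is in your favor, but it underscores that the computation must actually be done.
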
 

\begin{proof}
Let $(\bP^2)^{\ast}(\F_q)$ denote the set of all $\F_q$-lines in $\bP^2$.  Consider the following finite set:
$$
\mathcal{I} = \{ (L, P) \ | \ L \in (\mathbb{P}^2)^{\ast} (\F_q) \text{ and } P\in (L\cap C)(\F_q) \} 
$$
where $(L\cap C)(\F_q)$ stands for the set of $\F_q$-points of the intersection $L\cap C$. We count the cardinality of $\mathcal{I}$ in two different ways. First, fixing a point $P\in C(\F_q)$, there are exactly $q+1$ distinct $\F_q$-lines $L$ passing through $P$, which yields
\begin{equation}\label{eq:cardinality-I-first}
    \# \mathcal{I} = (q+1)\cdot N_q(C)
\end{equation}
where $N_q(C)$ denotes $\# C(\F_q)$. On the other hand, we can fix an $\F_q$-line $L$, and let $m_{L}$ denote $\# (C\cap L)(\F_q)$. Then
\begin{equation}\label{eq:cardinality-I-second}
    \# \mathcal{I} = \sum_{L\in (\bP^2)^{\ast}(\F_q)} m_{L}
\end{equation}
For each $0\leq i\leq \delta$, we define
$$
\mathcal{T}_{i} \colonequals \{ L \in (\bP^{2})^{\ast}(\F_q) \ | \ m_L = i\}.
$$
It is clear that $(\bP^2)^{\ast}(\F_q)$ is a disjoint union of $\mathcal{T}_i$ for $0\leq i\leq \delta$. Combining \eqref{eq:cardinality-I-first} and \eqref{eq:cardinality-I-second}, we obtain
\begin{equation}\label{eq:I-relation}
    (q+1)\cdot N_q(C) =  \sum_{L\in (\bP^2)^{\ast}(\F_q)} m_{L} =  \sum_{L\in \mathcal{T}_1} m_{L} + \sum_{L\in \mathcal{T}_2} m_{L} + \cdots + \sum_{L\in \mathcal{T}_{\delta}} m_{L}.
\end{equation}

Next, consider the following finite set,
$$
\mathcal{J} = \{ (L, \{P_1, P_2\}) \ | \ L\in (\bP^2)^{\ast}(\F_q) \text{ and } \{P_1, P_2\}\subset (L\cap C)(\F_q) \}.
$$
Note that the notation $\{P_1, P_2\}$ implicitly assumes $P_1\neq P_2$. Since $P_1$ and $P_2$ uniquely determine $L$, we have $\# \mathcal{J}=\binom{N_q(C)}{2}$. On the other hand,
$$
\# \mathcal{J} = \sum_{L\in\mathcal{T}_2} \binom{2}{2} + \sum_{L\in\mathcal{T}_3} \binom{3}{2} + \cdots + \sum_{L\in\mathcal{T}_{\delta}} \binom{\delta}{2}
$$
because a given element in $\mathcal{T}_i$ contributes exactly $\binom{i}{2}$ pairs to its second coordinate. 
Let $t_i = \# \mathcal{T}_i$. Combining the two formulas for the cardinality of $\mathcal{J}$, we obtain:
\begin{equation}\label{eq:constraint}
\binom{N_q(C)}{2} = \sum_{i=2}^{\delta} t_i \cdot \binom{i}{2}
\end{equation}
Consequently,
$$
 N_q(C)\cdot (N_q(C)-1) = \sum_{i=2}^{\delta} t_i \cdot i(i-1) \leq \delta \sum_{i=2}^{\delta} t_i \cdot (i-1)
$$
we obtain
\begin{equation}\label{ineq:minimum-sum}
  \sum_{i=2}^{\delta} t_i \cdot (i-1) \geq \frac{N_q(C)\cdot (N_q(C)-1)}{\delta}
\end{equation}

After rewriting \eqref{eq:I-relation} as,
\begin{equation*}
(q+1)\cdot N_q(C) = \sum_{i=1}^{\delta} t_i \cdot i = \sum_{i=1}^{\delta} t_i + \sum_{i=2}^{\delta} t_i (i-1) 
\end{equation*} 
and substituting \eqref{ineq:minimum-sum}, we obtain
$$
(q+1)\cdot N_q(C) \geq \sum_{i=1}^{\delta} t_i + \frac{N_q(C)\cdot (N_q(C)-1)}{\delta}
$$
Thus,
$$
\sum_{i=1}^{\delta} t_i \leq (q+1)\cdot N_q(C) - \frac{N_q(C)\cdot (N_q(C)-1)}{\delta}
$$
Using the fact that $\sum_{i=0}^{\delta} t_i = q^2+q+1$, we obtain:
\begin{equation}\label{ineq:t_0}
t_0 = (q^2+q+1)-\sum_{i=1}^{\delta} t_i \geq (q^2+q+1) - (q+1)\cdot N_q(C) + \frac{N_q(C)\cdot (N_q(C)-1)}{\delta}
\end{equation}
Our goal is to show that $t_0>0$ because $t_0$ exactly counts the $\F_q$-lines where $(L\cap C)(\F_q)=\emptyset$. We will use the Hasse-Weil inequality for geometrically irreducible plane curves \cite{AP96}*{Corollary 2.5}, which states that
\begin{equation}\label{ineq:Hasse-Weil}
q+1 - (\delta-1)(\delta-2)\sqrt{q} \leq N_q(C)\leq q+1+(\delta-1)(\delta-2)\sqrt{q}
\end{equation}
We obtain,
\begin{align*}
    t_0 &\geq (q^2+q+1) - (q+1)\cdot N_q(C) + \frac{N_q(C)\cdot (N_q(C)-1)}{\delta} \\
    & = (q^2+q+1) - N_q(C)\left(\frac{\delta(q+1) - N_q(C) +1}{\delta} \right) \\
    & \geq (q^2+q+1) - N_q(C)\left(\frac{\delta(q+1) - (q+1-(\delta-1)(\delta-2)\sqrt{q}) +1}{\delta} \right) \\
    &= (q^2+q+1) - N_q(C)\left(\frac{(\delta-1)(q+1) +(\delta-1)(\delta-2)\sqrt{q} +1}{\delta} \right)
\end{align*}
In order to prove that $t_0 > 0$, we will focus on proving 
\begin{align*}
    \delta(q^2+q+1) > N_q(C)\left( (\delta-1)(q+1)+(\delta-1)(\delta-2)\sqrt{q}+1\right)
\end{align*}
Using \eqref{ineq:Hasse-Weil}, it suffices to show that,
\begin{align*}
      \delta(q^2+q+1) > (q+1+(\delta-1)(\delta-2)\sqrt{q})\left( (\delta-1)(q+1)+(\delta-1)(\delta-2)\sqrt{q}+1\right)
\end{align*}
After simplifying and rearranging the terms, it is enough to prove that,
\begin{align*}
      q^2 & > \delta(\delta-1)(\delta-2) q\sqrt{q} + [(\delta-1)^2(\delta-2)^2 +(\delta-1)] q \\
      & \ \ \ + (\delta+1)(\delta-1)(\delta-2)\sqrt{q}.
\end{align*}
This is clearly true when $\delta=2$, so we will assume $\delta\geq 3$ for the rest of the proof. Assuming $q>\frac{1}{2}(\delta+1)(\delta-1)(\delta-2)\sqrt{q}$, it suffices to prove that,
\begin{align*}
  q > \delta(\delta-1)(\delta-2)\sqrt{q} + [(\delta-1)^2(\delta-2)^2 + (\delta+1)] 
\end{align*}
Using the quadratic formula and $\delta^2(\delta-1)^2(\delta-2)^2 \geq 4(\delta-1)^2(\delta-2)^2+4(\delta+1)$ for $\delta\geq 3$, one can check that the desired inequality holds provided that, 
$$
q > \left(\frac{1+\sqrt{2}}{2}\right)^2 \delta^2 (\delta-1)^2 (\delta-2)^2
$$
Finally, we have to make sure that our earlier assumption $q>\frac{1}{2}(\delta+1)(\delta-1)(\delta-2)\sqrt{q}$ is valid. This is indeed the case since
$$
q > \left(\frac{1+\sqrt{2}}{2}\right)^2 \delta^2 (\delta-1)^2 (\delta-2)^2 > \left(\frac{1}{2} (\delta+1)(\delta-1)(\delta-2)\right)^2
$$
holds for $\delta\geq 3$. This completes the proof. \end{proof}

\begin{remark} The conclusion of Proposition~\ref{prop:curves} continues to hold when $X$ is irreducible over $\F_q$. Indeed, if $X$ is irreducible but not geometrically irreducible, we can write $X=X_1\cup X_2\cup\cdots \cup X_s$ where each $X_i$ is geometrically irreducible and $s\geq 2$. Without loss of generality, we have $X_i = \phi^{i}(X_1)$ where $\phi\colon \bP^2\to\bP^2$ denotes the Frobenius map $[x:y:z]\mapsto [x^q:y^q:z^q]$. As a result, each $\F_q$-point of $X$ belongs to $X_i$ for all $1\leq i\leq s$. In particular, each $\F_q$-point of $X$ must belong to the intersection $X_1\cap X_2$. Let $\delta=\deg(X)$, $\delta_1=\deg(X_1)$, and $\delta_2=\deg(X_2)$. Applying B\'ezout's theorem, we obtain that $\# X(\F_q)\leq \delta_1 \delta_2 \leq \frac{(\delta_1+\delta_2)^2}{4} \leq \frac{\delta^2}{4}$. Thus, the total number of $\F_q$-lines passing through some $\F_q$-point of $X$ is at most $\frac{\delta^2}{4}\cdot(q+1) < q^2+q+1$ because $q>\left(\frac{1+\sqrt{2}}{2}\right)^2\delta^2 (\delta-1)^2 (\delta-2)^2 > \frac{\delta^2}{4}$ for $\delta\geq 3$. For $\delta=2$, the conclusion is still true because $\frac{\delta^2}{4}=1$ and $q\geq 2$. In particular, there exists an $\F_q$-line $L$ which does not pass through any $\F_q$-point of $X$, as desired. \end{remark}

\begin{remark}
We remark that an alternative way to prove Proposition~\ref{prop:curves} is to use the Chebotarev Density Theorem for varieties over finite fields \cite{Ent21}*{Theorem 3}. From a given plane curve $C$, one can construct a finite \'etale map $f\colon X\to Y$ where $Y=(\bP^2)^{\ast}$ parametrizes lines in $\bP^2$ and $f^{-1}(L)$ records the intersection $L\cap C$. Thus, the problem reduces to finding an $\F_q$-point $L\in Y$ such that $f^{-1}(L)$ has no fixed point in its orbit under the Frobenius action. Since the orbit decomposition should behave uniformly as $q\to\infty$, we obtain the desired conclusion for $q\gg d$. Making the bound $q\gg d$ effective via this method is much more complex as there are many implicit constants. We believe that our approach has advantages of being both elementary and yet providing an explicit bound $q>\left(\left(\frac{1+\sqrt{2}}{2}\right) \delta(\delta-1)(\delta-2)\right)^2$. 
\end{remark}
 
Using a slicing method, we can generalize the previous result to any geometrically irreducible hypersurface of degree at least $4$. 

\begin{proposition}\label{prop:hypersurfaces}
Suppose $X\subset \bP^n$ is a geometrically irreducible hypersurface of degree $\delta\geq 4$ defined over $\F_q$. If $q>\left(\frac{1+\sqrt{2}}{2}\right)^2 \delta^2 (\delta-1)^2 (\delta-2)^2$, we can find an $\F_q$-line $L\subset \bP^n$ such that the intersection $X\cap L$ has no $\F_q$-points.
\end{proposition}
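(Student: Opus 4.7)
The strategy is to reduce from a hypersurface $X \subset \mathbb{P}^n$ to a plane curve by intersecting with a carefully chosen $\mathbb{F}_q$-plane, and then invoke Proposition~\ref{prop:curves} directly. The base case $n = 2$ is literally Proposition~\ref{prop:curves} (note $\delta \geq 4 \geq 2$), so from now on I would assume $n \geq 3$. The aim is to produce an $\mathbb{F}_q$-plane $\Pi \cong \mathbb{P}^2 \subset \mathbb{P}^n$ such that $C := X \cap \Pi$ is a geometrically irreducible plane curve of degree $\delta$ in $\Pi$. Once $\Pi$ is in hand, Proposition~\ref{prop:curves} applied to $C \subset \Pi$ uses \emph{the same bound on $q$} (since $C$ also has degree $\delta$) and yields an $\mathbb{F}_q$-line $L \subset \Pi$ with $(C \cap L)(\mathbb{F}_q) = \emptyset$. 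Because $L \subset \Pi$, we have $L \cap X = L \cap C$, and this $L$ is the line required by the proposition.

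To produce the plane $\Pi$ I would proceed by iterated hyperplane slicing, using an $\mathbb{F}_q$-version of Bertini's theorem for geometric irreducibility. Classically, for a geometrically irreducible hypersurface $Y \subset \mathbb{P}^m$ of dimension at least $2$, a generic hyperplane section is geometrically irreducible of the same degree. Over $\mathbb{F}_q$ this means that the ``bad'' hyperplanes form a proper closed subvariety $B \subset (\mathbb{P}^m)^{\ast}$, and the key quantitative input is a bound on the degree of $B$ in terms of $\delta$ and $m$ (obtained, for instance, from the classical discriminant/resultant description of reducible plane sections, or from the explicit equations cut out by the Chow form). A Lang--Weil-style count then guarantees an $\mathbb{F}_q$-point of the complement once $q$ is sufficiently large relative to $\delta$ and $m$. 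Iterating this $n-2$ times reduces $X$ through a chain of geometrically irreducible hypersurfaces to a plane curve $C = X \cap \Pi$ of degree $\delta$.

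The main obstacle is keeping the effective Bertini step compatible with the bound on $q$ imposed by Proposition~\ref{prop:curves}, which depends only on $\delta$ and carries no obvious slack for $n$-dependent factors. The degree of the Bertini bad locus must therefore be controlled by a polynomial only in $\delta$, after which the Lang--Weil count has to be seen to hold well below the threshold $q > \left(\tfrac{1+\sqrt{2}}{2}\right)^2 \delta^2 (\delta-1)^2 (\delta-2)^2$. The restriction $\delta \geq 4$ is most naturally explained at this step: for $\delta \leq 3$ the reducibility locus can behave anomalously (e.g., cones, reducible quadrics), so the low-degree cases would need separate direct arguments rather than the slicing reduction above.
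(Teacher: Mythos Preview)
Your plan is exactly the paper's approach: slice $X$ by an $\F_q$-plane $\Pi$ so that $C=X\cap\Pi$ is a geometrically irreducible plane curve of degree $\delta$, and then apply Proposition~\ref{prop:curves} inside $\Pi$. The one genuine gap in your proposal is the effective Bertini step. You describe iterated hyperplane slicing together with a Lang--Weil count on the ``bad'' locus, and you correctly flag that the resulting threshold must depend only on $\delta$, not on $n$; but you do not supply such a bound, and a naive Lang--Weil argument on the reducibility locus in $(\bP^m)^\ast$ would a priori pick up $m$-dependent constants. The paper closes this gap in one stroke by citing an explicit effective result (Kaltofen, sharpened by Cafure--Matera): for a geometrically irreducible hypersurface of degree $\delta$ in $\bP^n$, there exists an $\F_q$-plane whose section is geometrically irreducible as soon as $q>\tfrac{1}{2}(3\delta^4-4\delta^3+5\delta^2)$, with no dependence on $n$ at all. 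No iteration is needed.

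This citation also pins down the role of $\delta\ge 4$: it is purely numerical. One simply checks that
\[
\left(\frac{1+\sqrt{2}}{2}\right)^2 \delta^2(\delta-1)^2(\delta-2)^2 \;\ge\; \frac{1}{2}\bigl(3\delta^4-4\delta^3+5\delta^2\bigr)
\]
holds for all $\delta\ge 4$ (and fails for $\delta=2,3$), so the hypothesis of the proposition already implies the Cafure--Matera threshold. Your speculation that $\delta\ge 4$ is needed because the reducibility locus ``behaves anomalously'' for small $\delta$ is not the actual reason here.
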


\begin{proof}
Using Kaltofen's result \cite{Kal91}*{Theorem 5}, which was made explicit in \cite{CM06}*{Corollary 3.2}, we can find an $\F_q$-plane $H\subset \bP^n$ such that $C\colonequals X\cap H$ is a geometrically irreducible plane curve provided that $q >\frac{1}{2}\left(3\delta^4-4\delta ^3+5\delta^2\right)$. We can use this result since it is straightforward to verify that
$$
\left(\frac{1+\sqrt{2}}{2}\right)^2 \delta^2 (\delta-1)^2 (\delta-2)^2 \geq \frac{1}{2}\left(3\delta^4-4\delta ^3+5\delta^2\right)
$$
holds for all $\delta\geq 4$. Applying Proposition~\ref{prop:curves} to the curve $C$ inside $H\cong \bP^2$, we immediately obtain the desired result. 
\end{proof}

We are now ready to present the proof of the main result. 

\begin{proof}[Proof of Theorem~\ref{thm:pencil}]
We apply Proposition~\ref{prop:hypersurfaces} to the discriminant hypersurface $X=\mathcal{D}_{n, d}$ which parameterizes all singular degree $d$ hypersurfaces in $\bP^n$. It is known that $X$ is geometrically irreducible \cite{EH16}*{Proposition 7.1} and the degree of $X$ is $\delta = (n+1)(d-1)^n$ by \cite{EH16}*{Proposition 7.4}. Note that the inequality $\delta \geq 4$ always holds except for the case $(n, d)=(2, 2)$ which was already handled in our previous paper \cite{AG22}*{Example 2.6}. We obtain an $\F_q$-line $L$ whose intersection with $X$ has no $\F_q$-points; this line $L\cong \bP^1$ corresponds to a pencil of hypersurfaces of degree $d$ such that each of the $q+1$ distinct $\F_q$-members is smooth.
\end{proof}

\section{The pencil of quadric surfaces}~\label{sect:quadrics}

In this section, we show that Question~\ref{quest:pencils} has a positive answer when $(n, d)=(3, 2)$ for \emph{all} $q$. There are at least two approaches to show that there exists a pencil of quadric surfaces in $\bP^3$ where each $\F_q$-member is smooth. First, applying Theorem~\ref{thm:pencil} directly with $(n, d)=(3, 2)$, there exists a desired pencil provided that 
$$
q > \left(\frac{1+\sqrt{2}}{2}\right)^2 \cdot 4^2\cdot 3^2 \cdot 2^2 \approx 839.3
$$
Thus, we only need to check the conclusion for all prime powers $q\leq 839$. This can be achieved by randomly sampling a pair of quadrics using a computer algebra system and searching until one finds a pencil that works. The second method, presented below, is more conceptual and directly constructs the desired pencil.

We will first focus on the case when $q$ is odd, and afterwards consider the case when $q$ is even. We begin with an elementary lemma which will help us in showing the irreducibility of a certain quadratic in the construction later.

\begin{lemma}\label{lemma:non-square}
Suppose $\F_q$ is a finite field with $q$ odd. Then there exists a square $s\in \F_q$ such that $s+1$ is a non-square.
\end{lemma}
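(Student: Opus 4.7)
The plan is a counting argument via the quadratic character. Let $\chi$ denote the quadratic character of $\F_q^*$, extended by $\chi(0) = 0$. Writing $M := \#\{s \in \F_q : s \text{ is a square and } s+1 \text{ is a non-square}\}$, the goal is to show $M \geq 1$.

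The idea is to realize $M$ as a quadratic character sum of the form
\[
M \;\approx\; \sum_{s \in \F_q} \frac{1+\chi(s)}{2}\cdot\frac{1-\chi(s+1)}{2},
\]
where the ``$\approx$'' hides a small boundary correction coming from the fact that $\chi(0) = 0$ is neither $+1$ nor $-1$. Indeed, for $s \in \F_q\setminus\{0,-1\}$ the factor $(1+\chi(s))/2$ is the indicator of ``$s$ is a nonzero square'' and $(1-\chi(s+1))/2$ is the indicator of ``$s+1$ is a non-square'', while the two boundary values $s=0$ (where $s+1=1$ is a square) and $s=-1$ (where $s+1=0$ is a square) contribute nothing to $M$.

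Expanding the product gives four pieces. Three of them are immediate from $\sum_{t \in \F_q^*}\chi(t) = 0$. The one nontrivial ingredient is the Jacobsthal-type sum $\sum_s \chi(s(s+1))$, which I would evaluate by writing $\chi(s(s+1)) = \chi(s^2(1+s^{-1})) = \chi(1+s^{-1})$ for $s \neq 0$ and observing that $1+s^{-1}$ ranges bijectively over $\F_q \setminus \{1\}$ as $s$ ranges over $\F_q^*$, yielding $\sum_s \chi(s(s+1)) = -\chi(1) = -1$.

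Collecting the four pieces should give $M = (q - \chi(-1))/4$. Since $q \equiv \chi(-1) \pmod{4}$ for $q$ odd, this is an integer, and it is strictly positive for every $q \geq 3$, proving $M \geq 1$ and hence the lemma. The main place requiring care is the bookkeeping of the boundary values $s = 0, -1$ and $s+1 = 0, 1$ when passing between indicators and character values; beyond that, the calculation is routine and I do not foresee a serious obstacle.
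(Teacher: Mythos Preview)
Your argument is correct. The character-sum computation goes through exactly as you outline: expanding the product, the three linear pieces vanish, the Jacobsthal-type sum $\sum_{s}\chi(s(s+1))$ equals $-1$ via the substitution $t=1+s^{-1}$, and after subtracting the boundary contribution $\tfrac{1+\chi(-1)}{4}$ coming from $s=-1$ (the term $s=0$ already gives $0$) one lands on $M=(q-\chi(-1))/4$, which is a positive integer for every odd $q$.

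This is a genuinely different route from the paper's. The paper argues by contradiction: if every square $s$ had $s+1$ also a square, then each additive coset $\{x,x+1,\dots,x+p-1\}$ of the prime subfield would consist either entirely of squares or entirely of non-squares, forcing the number of squares to be divisible by $p$; but that number is $(p^r+1)/2$, a contradiction. Your approach is more quantitative: it produces the exact count $M=(q-\chi(-1))/4$, which is strictly stronger information and transparently positive. The paper's argument, on the other hand, is slightly more elementary in that it avoids any character machinery and uses only the additive structure of $\F_p\subset\F_q$ together with a divisibility obstruction. Both are short; yours gives more, theirs assumes less.
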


\begin{proof}
Assume, to the contrary, that for each square $s\in \F_q$, the element $s+1$ is also a square. Write $q=p^r$ where $p$ is an odd prime number and $r\geq 1$. Observe that for each $x\in \F_q$, either all of the numbers $x,x+1, \dots, x+p-1$ are squares, or none of them is a square. Indeed, once there exists a square in this sequence, our assumption leads to all of them being squares.

Now, there are exactly
$$
1 + \frac{q-1}{2} = 1 + \frac{p^r-1}{2} = \frac{p^r+1}{2}
$$
squares in $\F_q$. However, the observation above implies that the number of squares must be a multiple of $p$. This is a contradiction, as $p$ does not divide $\frac{p^r+1}{2}$. \end{proof}

\begin{corollary}\label{cor:non-square}
Suppose $\F_q$ is a finite field with $q$ odd. Then there exists an element $c\in \F_q$ such that $c^2-2c+5$ is a non-square in $\F_q$.
\end{corollary}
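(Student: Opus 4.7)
The plan is to reduce the corollary directly to Lemma~\ref{lemma:non-square} by completing the square. Since $q$ is odd, the element $2 \in \F_q$ is invertible and $4 = 2^2$ is a nonzero square, so multiplying or dividing by $4$ preserves the property of being a square or non-square.

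First I would rewrite
\[
c^2 - 2c + 5 = (c-1)^2 + 4.
\]
Since $4$ is a square, $(c-1)^2 + 4$ is a non-square in $\F_q$ if and only if
\[
\frac{(c-1)^2 + 4}{4} \;=\; \left(\frac{c-1}{2}\right)^2 + 1
\]
is a non-square. Thus the corollary will follow once I can find $u \in \F_q$ such that $u^2 + 1$ is a non-square, for then $c \colonequals 2u + 1$ does the job.

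Finally, I apply Lemma~\ref{lemma:non-square}: it produces a square $s \in \F_q$ such that $s+1$ is a non-square. Writing $s = u^2$ and setting $c = 2u + 1$, the previous reduction gives exactly $c^2 - 2c + 5 = 4(u^2 + 1)$, which is a non-square in $\F_q$ as desired. There is no real obstacle here, since all the work has been done in Lemma~\ref{lemma:non-square}; the only point requiring (minor) care is verifying that the substitutions are legitimate, which uses only that $q$ is odd.
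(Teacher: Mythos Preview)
Your proof is correct and essentially identical to the paper's: both apply Lemma~\ref{lemma:non-square} to obtain a square $s=u^2$ with $s+1$ a non-square, set $c=2u+1$, and compute $c^2-2c+5=4(u^2+1)=4(s+1)$. The paper's version is just terser, omitting the motivating remark about completing the square.
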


\begin{proof}
By Lemma~\ref{lemma:non-square}, there exists a square $s=b^2\in \F_q$ such that $s+1\in\F_q$ is a non-square. Let $c=2b+1$. Then $c^2-2c+5=4(b^2+1)=4(s+1)$ is a non-square. \end{proof}

The following construction shows that Question~\ref{quest:pencils} has a positive answer in the case $(n, d)=(3, 2)$ over $\F_q$ for each odd $q$. 

\begin{proposition}\label{prop:quadric-odd-q}
Let $q$ be an odd prime power. Let $c\in \F_q$ be such that $c^2-2c+5$ is a non-square. Consider the homogeneous quadratic polynomials
\begin{align*}
    f_0 &= x^2 + y^2 + z^2 + w^2 \\
    f_1 &= xy + yz + zw + c wx
\end{align*}
in $\F_q[x,y,z,w]$. Then each of the $q+1$ distinct $\F_q$-members of the pencil $\langle f_0, f_1 \rangle$ is a smooth quadric surface in $\bP^3$.
\end{proposition}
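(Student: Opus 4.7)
The plan is to translate smoothness of each pencil member into a statement about the characteristic polynomial of the symmetric matrix of $f_1$, and then verify that polynomial has no roots in $\F_q$ using the hypothesis on $c$.

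First I would set up the matrix picture. A quadric $\{f=0\}\subset \bP^3$ is smooth if and only if its associated symmetric $4\times 4$ matrix is invertible (the singular locus of a quadric is cut out by the partial derivatives, whose vanishing coincides with the kernel of the symmetric matrix). Writing $M_0=I_4$ for $f_0$, and letting $A=2M_1$ where $M_1$ is the symmetric matrix of $f_1$, one sees that $A$ is the tridiagonal-plus-corner matrix with rows $(0,1,0,c)$, $(1,0,1,0)$, $(0,1,0,1)$, $(c,0,1,0)$. The member of the pencil indexed by $[\lambda:\mu]\in\bP^1(\F_q)$ has matrix $\lambda I+(\mu/2)A$; its determinant vanishes exactly when $\mu\neq 0$ and $s\colonequals -2\lambda/\mu$ is an eigenvalue of $A$. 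So the entire problem reduces to showing that the characteristic polynomial $P(t)\colonequals\det(tI+A)$ has no root in $\F_q$ (the case $\mu=0$ is automatic since $\det(\lambda I)=\lambda^4\neq 0$).

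Next, I would expand $\det(tI+A)$ by cofactors along the first row. A direct computation should give
\[
P(t)=t^4-(3+c^2)\,t^2+(c-1)^2.
\]
This is the crucial identity, and the choice of the coefficients in $f_1$ (in particular, the single parameter $c$ appearing on the off-corner entry) is what makes $P(t)$ depend polynomially on $c$ in a transparent way. Since $P$ involves only even powers of $t$, it is natural to view it as a quadratic in $u=t^2$:
\[
u^2-(3+c^2)\,u+(c-1)^2=0.
\]

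The heart of the argument is then the following factorization of the discriminant of this quadratic in $u$:
\[
(3+c^2)^2-4(c-1)^2=\bigl((3+c^2)-2(c-1)\bigr)\bigl((3+c^2)+2(c-1)\bigr)=(c^2-2c+5)(c+1)^2.
\]
By the hypothesis provided by Corollary~\ref{cor:non-square}, the factor $c^2-2c+5$ is a non-square in $\F_q$, so the discriminant is a non-square whenever $c+1\neq 0$; hence both roots $u_1,u_2$ of the quadratic lie in the proper quadratic extension $\F_q(\sqrt{c^2-2c+5})\setminus \F_q$. If some $s\in\F_q$ satisfied $P(s)=0$, then $u=s^2\in\F_q$ would be one of these roots, a contradiction. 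Finally I would handle the degenerate case $c=-1$ separately: there $P(t)=(t^2-2)^2$, so I need $2$ to be a non-square in $\F_q$, which is exactly what $c^2-2c+5=8$ being a non-square asserts (since $8=2\cdot 2^2$). The main obstacle is simply the determinant computation; everything afterward is a clean exercise in manipulating quadratic residues, and the hypothesis on $c$ was tailor-made to make the discriminant calculation work.
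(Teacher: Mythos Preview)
Your proof is correct and takes essentially the same approach as the paper: both compute the determinant of the $4\times4$ symmetric matrix of $sf_0+tf_1$ and show it has no $\F_q$-zeros via a discriminant argument. The only cosmetic difference is that the paper factors the quartic directly as $\bigl((1-c)t^{2}+2(c+1)st-4s^{2}\bigr)\bigl((1-c)t^{2}-2(c+1)st-4s^{2}\bigr)$, each factor having discriminant $4(c^{2}-2c+5)$, which sidesteps your separate treatment of the case $c=-1$.
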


\begin{proof}
Recall that an arbitrary element of the pencil is defined by a polynomial $h=s f_0 + t f_1$ where $[s:t]\in \bP^1$. We want to show that none of the singular members of the pencil is defined over $\F_q$. Given $h = s f_0 + t f_1$, we have:
\begin{align*}
    h_x &= 2sx+t(y+cw), \ \ \ \ \ \ \ h_y = 2sy+t(x+z),  \\ 
    h_z &= 2sz+t(y+w), \ \ \ \ \ \ \ \ h_w = 2sw+t(z+cx).
\end{align*}
The singular points of $\{h=0\}$ must satisfy $h_x=h_y=h_z=h_w=0$. We express these linear equations in matrix notation:
\begin{align*}
    \begin{pmatrix}
    2s & t & 0 & ct \\
    t & 2s & t & 0 \\
    0 & t & 2s & t \\
    ct & 0 & t & 2s
    \end{pmatrix} \cdot \begin{pmatrix} 
    x \\ y \\ z \\ w 
    \end{pmatrix} = \begin{pmatrix}
    0 \\ 0 \\ 0 \\ 0
    \end{pmatrix}
\end{align*}
In particular, the determinant of the matrix above must vanish. One can check that the determinant is equal to:
$$
\left((1-c)t^{2} + 2(c+1)st-4s^{2}\right)\left((1-c)t^{2}-2(c+1)st-4s^{2}\right)
$$
We claim that each of the quadratic factors is irreducible over $\F_q$. Indeed, the discriminant of both quadratics is 
$$
4(c+1)^2+16(1-c)=4c^2-8c+20 = 4(c^2-2c+5)
$$
which is a non-square in $\F_q$ by our choice of $c$. Thus, the four roots of the determinant above are not in $\F_q$, which implies that all the $\F_q$-members of the pencil are smooth. \end{proof}

Next, we consider the case when $q$ is even. We begin with a quick lemma on the irreducibility of quadratic polynomials in characteristic $2$.

\begin{lemma}\label{lemma:irreducible-even-q}
Let $q$ be an even prime power. Then there exists an element $c\in \F_q$ such that $t^2+t+c$ is an irreducible polynomial in $\F_q[t]$.
\end{lemma}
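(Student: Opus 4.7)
The plan is to exploit the Artin--Schreier additive structure available in characteristic $2$. Specifically, consider the map $\phi \colon \F_q \to \F_q$ defined by $\phi(x) = x^2 + x$. Since $q$ is even, we are in characteristic $2$, so $(x+y)^2 = x^2 + y^2$, which implies that $\phi$ is $\F_2$-linear (in fact, additive). I would verify that the kernel of $\phi$ consists precisely of the roots of $t^2 + t = t(t+1)$, namely $\{0, 1\} \subset \F_q$, so $\ker \phi$ has size $2$.

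By the rank--nullity theorem applied to the $\F_2$-linear map $\phi$, the image has size $q/2$, and consequently the complement $\F_q \setminus \phi(\F_q)$ also has size $q/2 \geq 1$. I would pick any $c$ in this complement. By construction, $t^2 + t + c$ has no root in $\F_q$. Moreover, the derivative of $t^2 + t + c$ is the constant $1$, so the polynomial is separable and in particular has no repeated roots. A degree-$2$ polynomial with no root in $\F_q$ is automatically irreducible over $\F_q$, so $t^2 + t + c$ is irreducible, as desired.

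The argument is genuinely short: the only subtle point is the characteristic-$2$ observation that the Frobenius $x \mapsto x^2$ is additive, which is what upgrades $\phi$ from a mere set map into an $\F_2$-linear map whose image must therefore be a proper $\F_2$-subspace of $\F_q$. Once that is in place, the rest is a simple counting argument, so I do not anticipate any real obstacle.
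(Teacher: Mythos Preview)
Your argument is correct, but it is genuinely different from the paper's. The paper proceeds by first taking the degree-$2$ extension $K/\F_q$, choosing any $u\in K\setminus\F_q$, and writing down its minimal polynomial $t^{2}+bt+d$; after observing that $b\neq 0$ (otherwise in characteristic $2$ the polynomial would be a square), a substitution $t\mapsto bt$ followed by division by $b^{2}$ produces the irreducible polynomial $t^{2}+t+d/b^{2}$, so $c=d/b^{2}$ works. Your route instead exploits the additivity of the Artin--Schreier map $x\mapsto x^{2}+x$ and a cardinality count of its image. Your approach has the advantage of being self-contained inside $\F_q$ and of yielding the quantitative byproduct that exactly $q/2$ choices of $c$ work; the paper's approach, while equally short, trades that count for a slightly more conceptual picture via the extension field. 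One minor remark: the separability check via the derivative is harmless but unnecessary, since a quadratic with no root in $\F_q$ cannot factor nontrivially regardless of whether its roots are distinct.
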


\begin{proof}
Let $K$ be a field extension of $\F_q$ with $[K:\F_q]=2$. Let $u\in K\setminus\F_q$. The minimal polynomial of $u$ over $\F_q$ is given by $\phi(t)=t^2+bx+d\in \F_q[t]$ for some $b, d\in \F_q$. Note that $b\neq 0$ because $\phi$ is irreducible and $\operatorname{char}(\F_q)=2$. Observe that
$$
\psi(t)\colonequals \frac{1}{b^2} \phi(t) = \left(\frac{t}{b}\right)^2+\left(\frac{t}{b}\right) + \frac{d}{b^2}
$$
is an irreducible polynomial too. Letting $c=d/b^2 \in \F_q$, we see that $\psi(tb)=t^2+t+c$ is also an irreducible polynomial in $\F_q[t]$. \end{proof}

The following construction shows that Question~\ref{quest:pencils} has a positive answer in the case $(n, d)=(3, 2)$ over $\F_q$ for each even $q$. 

\begin{proposition}\label{prop:quadric-even-q}
Let $q$ be an even prime power. Let $c\in \F_q$ be such that $t^2+t+c$ is an irreducible polynomial in $\F_q[t]$. Consider the homogeneous quadratic polynomials
\begin{align*}
    f_0 &= x^2 + y^2 + xy + yz + czw \\
    f_1 &= x^2 + z^2 + yz + xw
\end{align*}
in $\F_q[x,y,z,w]$. Then each of the $q+1$ distinct $\F_q$-members of the pencil $\langle f_0, f_1 \rangle$ is a smooth quadric surface in $\bP^3$.
\end{proposition}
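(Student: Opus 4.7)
The plan is to mirror the strategy of Proposition~\ref{prop:quadric-odd-q}: take a general pencil member $h = sf_0 + tf_1$, write down the linear system $\nabla h = 0$ satisfied by the gradient at any singular point, and show that for every $[s:t] \in \bP^1(\F_q)$ this system admits only the trivial solution. In characteristic $2$ there is one subtlety: since $\partial_{x_i}(x_i^2) = 0$, the coefficient matrix $M(s,t)$ of $\nabla h$ will be a $4 \times 4$ \emph{alternating} matrix (symmetric with zero diagonal). A singular projective point forces a nonzero vector in $\ker M(s,t)$, so $\det M(s,t) = 0$ is a necessary condition for the pencil member to be singular, and it therefore suffices to verify that $\det M(s,t) \neq 0$ for every $[s:t] \in \bP^1(\F_q)$.

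First I would compute the gradients directly: $\nabla f_0 = (y,\, x + z,\, y + cw,\, cz)$ and $\nabla f_1 = (w,\, z,\, y,\, x)$. Assembling $\nabla h = 0$ in matrix form produces $M(s,t)$ whose strict upper-triangular entries are $m_{12} = s$, $m_{13} = 0$, $m_{14} = t$, $m_{23} = s + t$, $m_{24} = 0$, $m_{34} = cs$. Being alternating, $M(s,t)$ has determinant equal to the square of its Pfaffian:
\begin{equation*}
\det M(s,t) = \bigl(m_{12} m_{34} - m_{13} m_{24} + m_{14} m_{23}\bigr)^2 = \bigl(cs^2 + st + t^2\bigr)^2.
\end{equation*}

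The remaining step is to check that $Q(s,t) \colonequals cs^2 + st + t^2$ has no nontrivial zero in $\F_q^2$. If $s = 0$ then $Q = t^2$ vanishes only at $t = 0$. If $s \neq 0$, substituting $u = t/s$ yields $s^{-2} Q(s,t) = u^2 + u + c$, which is precisely the irreducible polynomial chosen via Lemma~\ref{lemma:irreducible-even-q} and hence has no root in $\F_q$. Thus $\det M(s,t) \neq 0$ for every $[s:t] \in \bP^1(\F_q)$, so $\nabla h = 0$ has only the trivial solution, meaning no pencil member has a singular point over $\F_q$, and all $q + 1$ $\F_q$-members of the pencil are smooth.

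The main (though still mild) obstacle is the careful characteristic-$2$ bookkeeping in computing partial derivatives and verifying that the Pfaffian of $M(s,t)$ collapses to the clean quadratic $cs^2 + st + t^2$. The polynomials $f_0, f_1$ appear to be engineered precisely so that this happens, and so that the resulting quadratic translates, via the homogenization $u = t/s$, into an instance of the irreducible polynomial $t^2 + t + c$ supplied by Lemma~\ref{lemma:irreducible-even-q}.
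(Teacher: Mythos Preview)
Your proof is correct and follows essentially the same approach as the paper: compute the coefficient matrix of $\nabla h=0$, show its determinant is $(cs^2+st+t^2)^2$, and invoke the irreducibility of $t^2+t+c$. Your additional observation that the matrix is alternating in characteristic~$2$, so that the determinant is automatically the square of the Pfaffian $m_{12}m_{34}-m_{13}m_{24}+m_{14}m_{23}$, is a pleasant conceptual refinement over the paper's bare ``one can check'' computation; one small wording quibble is that your argument actually shows the $\F_q$-members are smooth everywhere (not merely at $\F_q$-points), since $\det M(s,t)\neq 0$ rules out any nontrivial kernel over $\overline{\F_q}$.
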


\begin{proof}
Suppose $h=sf_0+tf_1$ defines a singular element of the pencil where $[s:t]\in\bP^1$. We have,
\begin{align*}
    h_x &= sy + tw,  \ \ \ \ \ \ \ \ \ \ \ \ \ \ \ \ \ \ h_y = s(x+z)+tz, \\
    h_z &= s(y+cw)+ty,  \ \ \ \ \ \ \ \ \ h_w = csz +tx.
\end{align*}
The singular points of $\{h=0\}$ must satisfy $h_x=h_y=h_z=h_w=0$. These linear conditions on $s$ and $t$ can be expressed in matrix notation:
\begin{align*}
\begin{pmatrix}
0 & s & 0 & t \\
s & 0 & s+t & 0 \\
0 & s+t & 0 & cs \\
t & 0 & cs & 0 
\end{pmatrix} \cdot \begin{pmatrix}  x \\ y\\ z\\ w 
\end{pmatrix} = \begin{pmatrix}
0 \\ 
0 \\
0 \\
0
\end{pmatrix}
\end{align*}
Consequently, the determinant of the matrix on the left hand side must vanish. One can check that the determinant is equal to $(t^2+st + c s^2)^2$ in characteristic $2$. Since $t^2+t+c$ is an irreducible polynomial in $\F_q[t]$ by our choice of $c\in\F_q$, the binary form $t^2 + st + cs^2$ does not vanish for each $[s:t]\in \bP^{1}(\F_q)$. In particular, each $\F_q$-member of the pencil is smooth.
\end{proof}

\begin{bibdiv}
\begin{biblist}

\bib{AG22}{article}{
    AUTHOR = {Asgarli, Shamil},
    AUTHOR = {Ghioca, Dragos},
     TITLE = {A {B}ertini type theorem for pencils over finite fields},
   JOURNAL = {Finite Fields Appl.},
    VOLUME = {77},
      YEAR = {2022},
     PAGES = {Paper No. 101936, 13 pp.},
      ISSN = {1071-5797},
}

\bib{AP96}{incollection}{
    AUTHOR = {Aubry, Yves},
    AUTHOR = {Perret, Marc},
     TITLE = {A {W}eil theorem for singular curves},
 BOOKTITLE = {Arithmetic, geometry and coding theory ({L}uminy, 1993)},
     PAGES = {1--7},
 PUBLISHER = {de Gruyter, Berlin},
      YEAR = {1996},
}

\bib{CM06}{article}{
    AUTHOR = {Cafure, Antonio},
    AUTHOR = {Matera, Guillermo},
     TITLE = {Improved explicit estimates on the number of solutions of
              equations over a finite field},
   JOURNAL = {Finite Fields Appl.},
    VOLUME = {12},
      YEAR = {2006},
    NUMBER = {2},
     PAGES = {155--185},
      ISSN = {1071-5797},
}

\bib{EH16}{book}{
    AUTHOR = {Eisenbud, David},
    AUTHOR ={Harris, Joe},
     TITLE = {3264 and all that---a second course in algebraic geometry},
 PUBLISHER = {Cambridge University Press, Cambridge},
      YEAR = {2016},
     PAGES = {xiv+616},
      ISBN = {978-1-107-60272-4; 978-1-107-01708-5},
       DOI = {10.1017/CBO9781139062046},
       URL = {https://doi-org.ezproxy.library.ubc.ca/10.1017/CBO9781139062046},
}

\bib{Ent21}{article}{
    AUTHOR = {Entin, Alexei},
     TITLE = {Monodromy of hyperplane sections of curves and decomposition
              statistics over finite fields},
   JOURNAL = {Int. Math. Res. Not. IMRN},
      YEAR = {2021},
    NUMBER = {14},
     PAGES = {10409--10441},
      ISSN = {1073-7928},
}

\bib{Kal91}{article}{
    AUTHOR = {Kaltofen, Erich},
     TITLE = {Effective {N}oether irreducibility forms and applications},
      NOTE = {23rd Symposium on the Theory of Computing (New Orleans, LA,
              1991)},
   JOURNAL = {J. Comput. System Sci.},
    VOLUME = {50},
      YEAR = {1995},
    NUMBER = {2},
     PAGES = {274--295},
      ISSN = {0022-0000},
}

\bib{LW54}{article}{
    AUTHOR = {Lang, Serge},
    AUTHOR = {Weil, Andr\'{e}},
     TITLE = {Number of points of varieties in finite fields},
   JOURNAL = {Amer. J. Math.},,
    VOLUME = {76},
      YEAR = {1954},
     PAGES = {819--827},
      ISSN = {0002-9327},
}

\end{biblist}
\end{bibdiv}

\end{document}